\documentclass[conference,letterpaper]{IEEEtran}
\IEEEoverridecommandlockouts
\usepackage[utf8]{inputenc}
\usepackage[english]{babel}

\usepackage{cite}
\usepackage{newclude}
\usepackage{amsmath,amssymb,amsfonts,amsthm}
\usepackage{algpseudocode}
\usepackage{algorithm}
\usepackage{graphicx}
\usepackage{textcomp}
\usepackage{xcolor}
\usepackage{multirow}
\usepackage{geometry}
\newtheorem{theorem}{Theorem}

\newtheorem{assumption}{Assumption}

\newcommand{\hwa}[1]{\textcolor{black}{#1}}

\def\BibTeX{{\rm B\kern-.05em{\sc i\kern-.025em b}\kern-.08em
    T\kern-.1667em\lower.7ex\hbox{E}\kern-.125emX}}
\begin{document}

\title{\vspace{0.25in}Distributed Edge Computing Task Allocation with Network Effects
\\
}

\author{\IEEEauthorblockN{Henry W. Abrahamson}
\IEEEauthorblockA{\textit{Department of ECE} \\
\textit{Northwestern University}\\
Evanston, IL, USA \\
henryabrahamson2022@u.northwestern.edu}
\and
\IEEEauthorblockN{Yongho Kim, Seongha Park}
\IEEEauthorblockA{\textit{Mathematics and Computer Science Division} \\
\textit{Argonne National Laboratory}\\
Lemont, IL, USA \\
\{yongho.kim, seongha.park\}@anl.gov} \\
\and
\IEEEauthorblockN{Ermin Wei}
\IEEEauthorblockA{\textit{Departments of ECE and IEMS} \\
\textit{Northwestern University}\\
Evanston, IL, USA \\
ermin.wei@northwestern.edu}
}

\maketitle

\begin{abstract}

Field-deployable edge computing nodes form a network and are used to complete scientific tasks for remote sensing and monitoring. The networked nodes collectively decide which scientific applications to run while they are constrained by various factors, such as differing hardware constraints from heterogeneous nodes and time-varying quality of service (QoS) requirements. We model the problem of task allocation as an optimization problem that maximizes the QoS, subject to the constraints. We solve the optimization problem using a dual-descent method, which can be easily implemented in a distributed way subject to the communication constraints of the network. Using a simulation that uses real-world data collected from Sage, a distributed sensor network, we analyze our policy's performance in dynamic situations where the required QoS and the nodes' capabilities change, and verify that it can adapt and return a feasible solution while accounting for those changes.
\end{abstract}



\section*{Acknowledgement}

This work was supported by the National Science Foundation (NSF) under
Grants CNS-2030251, and CMMI-2024774. This work utilized data from SAGE: A Software-Defined Sensor Network (NSF OAC-1935984), funded by the U.S. National Science Foundation’s Mid-Scale Research Infrastructure program. The research was supported by the U.S. Department of Energy under contract DE-AC02-06CH11357 and partially funded by the DOE Advanced Scientific Computing Research program through the grant Wildebeest: Migratory Computation for the Wireless 5G Digital Continuum (KJ0402020).

The submitted manuscript has been created by UChicago Argonne, LLC, Operator of Argonne National Laboratory (``Argonne"). Argonne, a U.S. Department of Energy Office of Science laboratory, is operated under Contract No. DE-AC02-06CH11357. The U.S. Government retains for itself, and others acting on its behalf, a paid-up nonexclusive, irrevocable worldwide license in said article to reproduce, prepare derivative works, distribute copies to the public, and perform publicly and display publicly, by or on behalf of the Government. The Department of Energy will provide public access to these results of federally sponsored research in accordance with the DOE Public Access Plan (http://energy.gov/downloads/doe-public-accessplan).

\section{Introduction}

In recent years, edge computing has brought new opportunities to move computation away from the cloud and to the edge, accelerating reaction times to local changes for time-sensitive applications. For example, in the field of environmental sensing, early wildfire detection via AI image processing to sense smoke is essential for first responders to plan and react before the wildfire becomes uncontrollable~\cite{altintas2015towards}. In this case, if a node detects a possible fire, it should report the detection to its neighbors so that they can adjust their own sensing and computing resource allocations and devote more to verify the existence of the fire or monitor its spread.


Task allocation in such urgent scenarios can be challenging because of the difficulties of dynamic task re-allocation, particularly when nodes' availability and maximum capabilities are also changing. Some tasks may need to be offloaded to other in-network nodes while one node is busy handling an urgent task. Other non-urgent tasks running on the same node may reduce their execution frequency to free up resources, impacting their data production rate. Some nodes may be lost due to the wildfire, or they may have inconsistent or unavailable communication capabilities. As such, the users who submitted those non-urgent tasks need to know how the execution of their tasks may be affected by these dynamic situations.

Determining a policy for task allocation for distributed edge nodes must consider various constraints of the problem, including both practical limitations from the hardware constraints of the system and the need to avoid undesirable behaviors. Network constraints also play a particularly significant role in developing schedulers for offloading tasks in the network~\cite{caminero2021quality, shreshta2021int, gao2024optimal}. In urgent computing~\cite{dazzi2024urgent}, however, the quality of service (QoS) becomes an important consideration because user applications capturing the urgent phenomenon should ideally be guaranteed for execution and may raise their execution frequency for monitoring the environmental conditions that change dynamically. 

Real-time queueing theory (RTQT) is one approach used to model urgent task allocation. Originally proposed in \cite{RTQTmain}, it provides a framework for analyzing queues of jobs that have deadlines, after which they expire and leave the queue. RTQT is used in applications such as video streaming \cite{RTQTvideo} and organ allocation for medical procedures \cite{RTQTorgan}. Much of the results in RTQT is primarily focused around maximizing throughput or minimizing missed deadlines, both common measures of QoS. However, for a multi-purpose edge computing system like Sage~\cite{Beckman:2019} that is responsible for multiple different tasks, a more flexible measure of QoS is needed. Different tasks might have different priorities that change over time, and different nodes in the network might complete a task with higher or lower quality outputs.

In situations like that, it becomes more advantageous to model task allocation in edge computing as a distributed optimization problem that can be tuned to induce the desired behavior for a specific application. Distributed optimization is a rich field with many different algorithms that offer different tradeoffs between computational complexity and convergence speed, such as EXTRA \cite{EXTRA}, DIGing \cite{DIGing}, accelerated dual-descent \cite{acceldualdescent}, and ADMM \cite{ADMM}. Due to its flexibility, distributed optimization has very broad applications including distributed games \cite{Yang2010optimgames}, distributed control \cite{nedic2018optimcontrol}, and data-processing for sensor networks \cite{rabbat2004optimsensors}. 

\hwa{
At this time however, Sage only relies on single-node decision-making, either running a job at a static rate or with a simple binary trigger that relies on purely local information, with no coordination between nodes \cite{sage_triggers}. 
As such, both casting our task allocation problem as a distributed optimization problem, and working towards an implementation of a simple distributed optimization algorithm to solve it within the constraints of the sensor network are necessary steps towards a full deployment of a distributed, reactive sensor network. Distributed optimization algorithms are well suited for this framework, as they utilize local communication and coordination instead of relying on potentially costly long-range or multi-hop communications to a central decision-maker \cite{Bertsekas_Tsitsiklis_1997}, ensuring a timely response to sudden, emergent events such as the previously mentioned wildfire example. }

Therefore, in this paper, we model the edge computing system and its given tasks as a distributed optimization problem that is trying to maximize some general QoS measure given the hardware and software constraints of the network. We pay particular attention to the user-derived constraints imposed by each different task, such as constraints on the placement of the tasks in the network. 

Our main contributions are: (1) providing a new formulation for task allocation in a distributed system by using an optimization framework, (2) showing that we can apply a standard distributed optimization algorithm to solve that problem, and (3) conducting simulations using real world data from Sage to show the benefits and efficacy of a distributed scheduler that can react quickly to changing situations in the environment as opposed to a centralized one that cannot.




\section{Formulation}
\label{sec:formulation}

In this section, we present our model of a network of distributed edge computing nodes, starting with an example to justify our assumptions. 

\subsection{Motivating Example: Wildfire Detection and Monitoring}
\label{sec:theexample}   

\begin{figure}
    \centering
    \includegraphics[width=\linewidth]{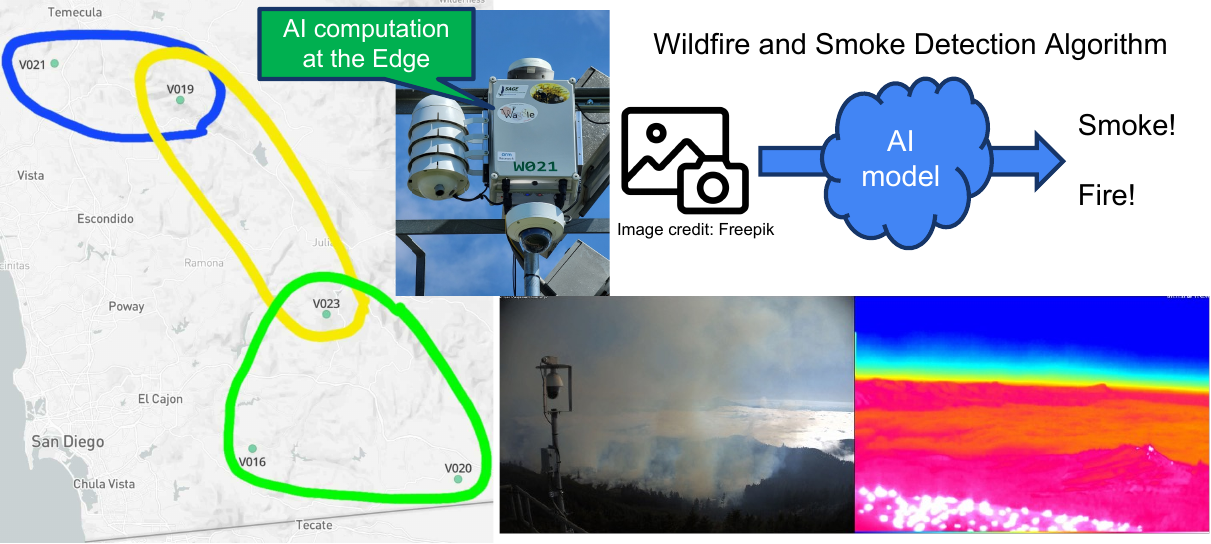}
    \caption{Wildfire and smoke detection as a use case: geographically grouped nodes run the smokey AI model~\cite{dewangan2022figlib} on visual and infrared camera images for early wildfire detection.}
    \label{fig:wildfire}
\end{figure}

Suppose that we have a set of 5 sensor nodes spread out across southern California, as in Fig. \ref{fig:wildfire}. These nodes are equipped with a camera to gather environmental data. One of their tasks, task 1, is to use image recognition for early detection of smoke that could lead to wildfires. However, the nodes are placed close together enough that measurements at some nodes will be highly correlated, if not identical to measurements at others. The nodes are therefore categorized by geographical placement and organized into 3 \textit{node groups}, represented by the colored regions. Their goal is to maximize the amount of measurements in each region, regardless of which exact node performs the task. 

We model this network as a graph in Fig. \ref{fig:testnetwork}. The node groups on the left correspond to the groups in Fig. \ref{fig:wildfire}. Recalling that the groups correspond to geographical proximity, it is natural that nodes within the same group be able to communicate with each other, so this system satisfies our communication requirement.

However, the nodes also have another task, task 2, from a different user from the multitenant system with a different scientific application.
Task 2 uses the same computing nodes to estimate the size of clouds in the sky for weather prediction and solar irradiance estimation. This task groups the nodes differently from the wildfire application, by decoupling the nodes 2 and 4. This might be the case because geographical proximity still determines general correlations, but there is some local circumstance that prevents nodes 2 and 4 from cooperating on task 2.
The network must therefore balance resources between wildfire detection and cloud analysis, being mindful that each task has different node groups for cooperation. 

\begin{figure}
    \centering
    \includegraphics[width=\linewidth]{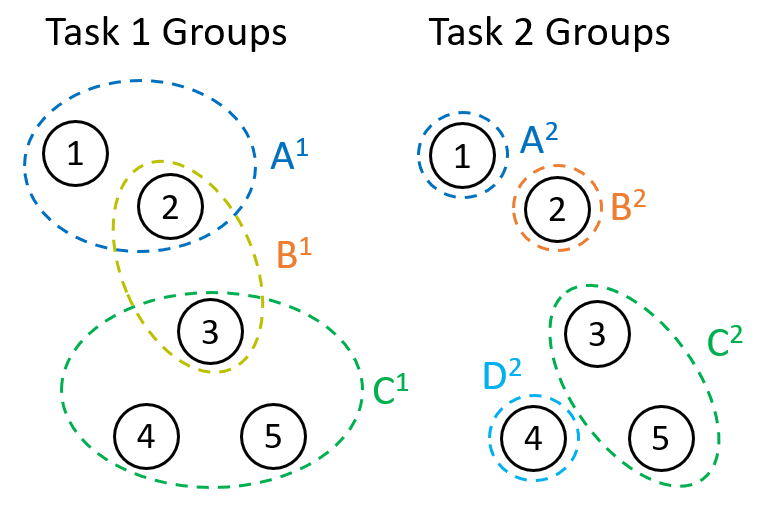}
    \caption{An example network. Each different colored dotted circle represents a different node group, with different node groups for task 1 (left) versus task 2 (right). Nodes within a group can communicate freely with each other. 
    }
    \label{fig:testnetwork}
\end{figure}

\subsection{Problem Formulation}
Let $\mathcal{N}$ be the set of nodes, and let $N$ be the total number of nodes in the network. Every node belongs to one or more node groups which each represents a set of nodes that are in some way able to meaningfully cooperate on a given task. 
Let $\mathcal{M}$ be the set of node groups, each associated with a task, and let $M$ be the total number of node groups. We represent each node group individually $m$ via the vector $\mathbf{g}_m \in \mathbb{R}^{N}$. $\mathbf{g}_m[i] > 0$ if node $i$ is included in the $m$th node group, and $\mathbf{g}_m[i] = 0$ otherwise. 

We will assume the following about node's communication capabilities:

\begin{assumption}
    \label{assum:comms}
    Nodes in the same node group can communicate with any other node in the same group freely.
\end{assumption}

Recalling the motivating example, if node groups are determined by geographical proximity, it is reasonable for communication to be possible within them.

Let $\mathcal{T}$ represent the set of distinct tasks, and let $T$ represent the number of distinct tasks.
Every task $t$ comes with a vector of attributes $\mathbf{p}^t_i$ representing requirements for running the task on node $i$ (e.g., the energy required to complete the task; see Table \ref{tab:sciencejobs} in section \ref{sec:simulation} for a more in-depth example). Note that, because nodes may have different computing resources and efficiency, the task attributes 
may differ between nodes. 

The goal of the system is to find a scheduling policy $\mathbf{x} = \begin{bmatrix}\mathbf{x}_1 \\ \mathbf{x}_2 \\ \vdots \\ \mathbf{x}_N\end{bmatrix}$
such that for each node $i$ with associated quality-of-service (QoS) metric $U_i(\mathbf{x}_i)$, the total QoS across all nodes, $\sum_{i\in \mathcal{N}} U_i(\mathbf{x}_i)$, is maximized. Each $\mathbf{x}_i \in \mathbb{R}^{T}$ represents the allocation of node $i$'s resources to each task, and is bounded above by the hardware constraints of $i$, $\mathbf{b}_i$, and below by $0$, forming the domain $\mathcal{X}_i$. \hwa{Finally, let $\mathbf{e}_i \in \mathbb{R}^T$ be the unit vector with $\mathbf{e}_i[j] = 1$ if $i =j$, and 0 otherwise.}
\hwa{Lastly, we consider $(\mathbf{g}_m \otimes\mathbf{e}_t)^{\top}\mathbf{x}$, the total effort allocated by node group $m$ to its associated task $t$, in which $\otimes$ indicates the Kronecker product, and a QoS minimum that must be achieved across that node group, $q_m \in \mathbb{R}$.} We assume that $q_m$ is known to all nodes in the $m$th node group.

Taking all of this together yields the optimization problem,

\begin{subequations}
\label{eq:staticproblem}
\begin{equation}
    \max_{\mathbf{x}_i\in\mathcal{X}_i}
    \sum_{i \in \mathcal{N}} U_i(\mathbf{x}_i) 
    \label{eq:thebigproblem}
\end{equation}
\begin{equation}
        \hwa{\text{ s.t. } \hspace{0.25cm} (\mathbf{g}_m \otimes\mathbf{e}_t)^\top\mathbf{x} \geq q_m \hspace{0.25cm} \forall m \in \mathcal{M}.}
    \label{eq:thebigconstraints}
\end{equation}
\end{subequations}

To ensure that strong duality holds for our theoretical guarantees in Section \ref{sec:algs}, we will make two more assumptions:

\begin{assumption}
\label{assum:concavity}
   Each function $U_i$ is concave in its argument.
\end{assumption}

\begin{assumption}
\label{assum:interior}
    Each set $\mathcal{X}_i$ is convex and has a non-empty interior with at least one strictly feasible point.
\end{assumption}

$U_i$ can be designed by the owners of the system, so Assumption \ref{assum:concavity} can be guaranteed. So long as each node has nonzero $\mathbf{b}_i$ and the QoS minimum can be achieved without any $\mathbf{x}_i = \mathbf{b}_i$ (both practical assumptions), Assumption \ref{assum:interior} is also satisfied.

We can easily extend our formulation to the case when the system is dynamic, which we model by including a time index $k$. There, we handle the case when the network is dynamic by having $G^t$ and $\mathcal{N}$ change over time, the case when the nodes themselves are dynamic by having $\mathcal{X}_i$ change over time, and the case where the network's priorities are dynamic by having $\mathbf{q}^t$ change over time. This yields the related problem,

\begin{subequations}
\begin{equation}
    \max_{\mathbf{x}_i(k)\in\mathcal{X}_i(k)}
    \sum_{i \in \mathcal{N}(k)} U_i(\mathbf{x}_i(k)) 
    \label{eq:thebigproblem_dynamic}
\end{equation}
\begin{equation}
        \hwa{\text{ s.t. } \hspace{0.25cm} (\mathbf{g}_m(k) \otimes\mathbf{e}_t)^\top\mathbf{x}(k) \geq q_m(k)\hspace{0.25cm} \forall m \in \mathcal{M}.}
    \label{eq:thebigconstraints_dynamic}
\end{equation}
\end{subequations}

The two problems are almost identical, so to simplify notation, for the purposes of theoretical analysis we primarily consider the problem described by (\ref{eq:thebigproblem}) and (\ref{eq:thebigconstraints}), and discuss the generalization to (\ref{eq:thebigproblem_dynamic}) and (\ref{eq:thebigconstraints_dynamic}) afterwards.

\section{Algorithm}
\label{sec:algs}


In order to solve (1), we first write its Lagrange dual. Here, for compactness, we rewrite the linear inequality constraints (\ref{eq:thebigconstraints}) as a single constraint below, 
\begin{equation}
    G\mathbf{x} \geq \mathbf{q}\space,
\end{equation}
where 
$\mathbf{q} = \begin{bmatrix}\mathbf{q}^1 \\ \mathbf{q}^2 \\ \vdots \\ \mathbf{q}^M\end{bmatrix}$, and \hwa{$G$ is a matrix with the $m$-th row being $(\mathbf{g}_m \otimes\mathbf{e}_t)^\top$.} We also define $\mathcal{X}$ to be the feasible region for $\mathbf{x}$ jointly defined by each $\mathcal{X}_i$. This yields the dual problem, 

\begin{equation}
\label{eq:dualproblem}
     \min_{\lambda\geq0} \max_{\mathbf{x} \in \mathcal{X}} \sum_{i \in \mathcal{N}} U_i(\mathbf{x}_i)  + \lambda^\intercal(G\mathbf{x}-\mathbf{q}).
\end{equation}

If strong duality holds, then (\ref{eq:dualproblem}) will return a solution for $\mathbf{x}$ that also solves (1). By solving the dual problem instead of the primal problem, we are able to write an algorithm that can be implemented in a distributed way. 
Thus, we propose Algorithm \ref{alg:dualmethod}, a standard dual-descent method with step size $\alpha>0$. 


\begin{algorithm}
\caption{The Dual-Descent Method}
\label{alg:dualmethod}
    \begin{algorithmic}
        \State Initialize $\mathbf{x} = 0$, $\lambda = 0$, $k = 0$, $\alpha>0$.
        \For{$k \geq 0$}
        \State $\mathbf{x}(k+1) = \arg\max_{\mathbf{x}\in \mathcal{X}} \sum_{i \in \mathcal{N}} U_i(\mathbf{x}_i)+\mathbf{\lambda}(k)^\mathbf{\intercal} G\mathbf{x}$
        \State $\mathbf{\lambda}(k+1) = \max\{\lambda(k)-\alpha(G\mathbf{x}(k+1)-\mathbf{q}),0\}$
        \EndFor
    \end{algorithmic}
    
\end{algorithm}


Note that there are two time scales in the system: the time scale that concerns the iterations of Algorithm \ref{alg:dualmethod}, the time scale that concerns changes in the environment and therefore changes the optimization problem. We will assume that our algorithm can run and converge faster than the problem changes, which we justify via our simulation in Section \ref{sec:simulation}.

\subsection{Theoretical Guarantees}


We now proceed to show that dual-descent can be implemented in a distributed way within our communication constraints, and that it converges to the optimal solution. 

\begin{theorem}
    Algorithm \ref{alg:dualmethod} can be performed with the communication constraints laid out in Section \ref{sec:formulation}.
\end{theorem}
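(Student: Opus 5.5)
We will show that each of the two updates in Algorithm \ref{alg:dualmethod} splits into pieces that need only node-local data plus messages exchanged inside a single node group, which Assumption \ref{assum:comms} permits. To set this up, assign dual variable $\lambda_m$ to node group $m$, and let every node $i$ with $\mathbf{g}_m[i]>0$ keep an identical copy of $\lambda_m$. We will then argue by induction on $k$ that these copies stay consistent.

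\textbf{Primal update.} Write $\lambda^\intercal G\mathbf{x} = \sum_{m} \lambda_m (\mathbf{g}_m\otimes \mathbf{e}_{t_m})^\top \mathbf{x} = \sum_{i\in\mathcal{N}} \sum_{m} \lambda_m \mathbf{g}_m[i]\, \mathbf{e}_{t_m}^\top \mathbf{x}_i$, where $t_m$ is the task associated with group $m$. This identity follows from the block structure of the Kronecker product, since $\mathbf{x}$ stacks the vectors $\mathbf{x}_i$. The set $\mathcal{X}$ is the Cartesian product of the sets $\mathcal{X}_i$. Hence the maximization in the first step separates across nodes:
\[
\mathbf{x}_i(k+1) = \arg\max_{\mathbf{x}_i\in\mathcal{X}_i} U_i(\mathbf{x}_i) + \sum_{m:\,\mathbf{g}_m[i]>0} \lambda_m(k)\,\mathbf{g}_m[i]\,\mathbf{e}_{t_m}^\top \mathbf{x}_i .
\]
Node $i$ knows $U_i$, $\mathcal{X}_i$, and $\mathbf{g}_m[i]$. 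It also holds $\lambda_m(k)$ for every group it belongs to. It can therefore compute its own block with no communication at all.

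\textbf{Dual update.} The projection $\max\{\cdot,0\}$ acts componentwise, so the update of $\lambda_m$ uses only the $m$-th row:
\[
\lambda_m(k+1)=\max\Big\{\lambda_m(k)-\alpha\Big(\sum_{j:\,\mathbf{g}_m[j]>0}\mathbf{g}_m[j]\,\mathbf{e}_{t_m}^\top\mathbf{x}_j(k+1)-q_m\Big),0\Big\}.
\]
Each term in this sum comes from a node in group $m$. By Assumption \ref{assum:comms}, each such node can broadcast its scalar $\mathbf{g}_m[j]\,\mathbf{x}_j(k+1)[t_m]$ to every other member of the group. Every member already knows $q_m$ and $\alpha$. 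So every member can evaluate the same expression and obtain an identical copy of $\lambda_m(k+1)$.

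\textbf{Induction and main obstacle.} The initialization $\lambda=0$ makes all copies agree at $k=0$, and the dual update preserves agreement, which closes the induction. The main point to check carefully is this consistency of the duplicated copies of $\lambda_m$ when a node lies in several groups. The argument above handles it: each group's multiplier is updated only from that group's own broadcasts. The node simply keeps one multiplier per group it belongs to, and no information ever has to cross between groups. Altogether, each iteration needs one intra-group broadcast per node per group it belongs to, so the algorithm runs within the stated communication constraints.
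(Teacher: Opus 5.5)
Your proof is correct and follows the same route as the paper's. The primal step separates across nodes because $\mathcal{X}$ is a product of the $\mathcal{X}_i$ and the objective splits into per-node terms, while each multiplier $\lambda_m$ depends only on the entries of $\mathbf{x}$ from group $m$, so the intra-group communication of Assumption~\ref{assum:comms} suffices. Your version is more careful than the paper's on two points it leaves implicit: you split $\lambda^\intercal G\mathbf{x}$ explicitly across nodes (showing node $i$ needs only the multipliers of groups containing it), and you verify by induction that the replicated copies of each $\lambda_m$ stay consistent.
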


\begin{proof}

Note that, for the update of $\mathbf{x}$, $\lambda$ is constant, and each $U_i$ only depends on its corresponding $\mathbf{x}_i$. Therefore, each node can optimize and update their own $\mathbf{x}_i$ independently from each other, since all the algorithm needs is the $\arg\max$, not the actual maximum value. Node $i$ will have the correct $\mathbf{x}_i$, but possibly incorrect $\mathbf{x}_j$ for $j \neq i$. However, this behavior is not an issue, as explained below.

Concerning the update of $\lambda$, consider the update for a particular element of $\lambda$, $\lambda^t_m$. $\lambda^t_m$ is the dual variable associated with task $t$ and corresponding node group $m$. Using the structure of $G$, the update for $\lambda^t_m$ is
\begin{equation}
    \lambda^t_m(k+1) = \max\{\lambda^t_m(k)-\alpha( (\mathbf{g}_m \otimes \mathbf{e}_t)^\top\mathbf{x}(k+1) - q_m),0\}.
\end{equation}
Recalling that $q_m$ is constant and known to all nodes in its corresponding node group by assumption, it poses no issue. Using the fact that $\mathbf{g}_m[i] = 0$ if $i$ isn't in the $m$th node group for task $t$, it follows that $\lambda^t_m(k+1)$ only depends on $\mathbf{x}[i]$ for which $i$ is in the that node group. By Assumption \ref{assum:comms}, nodes in the same node group are able to freely communicate with each other, so the nodes in node group $m$ can coordinate to update $\lambda^t_m(k+1)$ correctly. 


Thus, both update steps conform to the communication constraints as desired.

\end{proof}

To prove convergence to the correct value, we will use Slater's condition \cite{Slater}:

\begin{theorem}[Slater's condition]
    \label{thm:slater}
    Given an optimization problem of the form
    \begin{equation}
    \begin{split}
        \min_{\mathbf{x}\in\mathcal{X}} \text{ } & f_0(\mathbf{x}) \\
        \text{s.t. } & f_i(\mathbf{x}) \leq 0, \hspace{0.25cm} i=1\dots n \\
         & A\mathbf{x} = \mathbf{b},
    \end{split}
    \end{equation}
with each $f_0, f_1, \dots f_n$ convex, if there exists an $\mathbf{x}$ in the interior of the convex set $\mathcal{X}$ such that $f_i(\mathbf{x})<0$ for all $i$, then strong duality holds.
\end{theorem}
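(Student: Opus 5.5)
The statement is the classical Slater theorem, which the paper cites rather than derives. I would prove it by the standard separating-hyperplane argument for convex programs. The goal is to show that the dual optimum $d^\star$ equals the primal optimum $p^\star$ and is attained. Weak duality $d^\star \le p^\star$ is immediate: for any $\lambda\ge 0$, any $\nu$, and any feasible $\mathbf{x}$, the Lagrangian $f_0(\mathbf{x})+\sum_i\lambda_i f_i(\mathbf{x})+\nu^\top(A\mathbf{x}-\mathbf{b})$ is at most $f_0(\mathbf{x})$. So the work is in the reverse inequality. If $p^\star=-\infty$ there is nothing to prove, so assume $p^\star$ is finite.

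First, I would reduce to the case where $A$ has full row rank and the strictly feasible point is interior to $\mathcal{X}$ relative to the affine set $\{A\mathbf{x}=\mathbf{b}\}$. Redundant equality rows can be dropped without changing the problem or the dual value.

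Second, I would build the convex set
\[
\mathcal{A}=\{(u,v,w): \exists\,\mathbf{x}\in\mathcal{X},\ f_i(\mathbf{x})\le u_i,\ A\mathbf{x}-\mathbf{b}=v,\ f_0(\mathbf{x})\le w\},
\]
and the convex set $\mathcal{B}=\{(0,0,s): s<p^\star\}$. Convexity of $\mathcal{A}$ follows from convexity of the $f_i$ and $\mathcal{X}$ and linearity of the equality constraints. The sets $\mathcal{A}$ and $\mathcal{B}$ are disjoint by the definition of $p^\star$. The separating hyperplane theorem then gives $(\tilde\lambda,\tilde\nu,\mu)\neq 0$ with
\[
\tilde\lambda^\top u+\tilde\nu^\top v+\mu w\ \ge\ \mu p^\star \quad\text{for all } (u,v,w)\in\mathcal{A}.
\]
Because $\mathcal{A}$ is upward-closed in $u$ and $w$, we get $\tilde\lambda\ge 0$ and $\mu\ge 0$.

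Third, and this is the main obstacle, I would rule out $\mu=0$. If $\mu=0$, plug in the Slater point $\tilde{\mathbf{x}}$. Since $\tilde\lambda^\top f(\tilde{\mathbf{x}})+\tilde\nu^\top(A\tilde{\mathbf{x}}-\mathbf{b})\ge 0$ with $A\tilde{\mathbf{x}}=\mathbf{b}$ and $f_i(\tilde{\mathbf{x}})<0$, it follows that $\tilde\lambda=0$. Then $\tilde\nu\neq0$ and $\tilde\nu^\top(A\mathbf{x}-\mathbf{b})\ge 0$ on $\mathcal{X}$. Because $\tilde{\mathbf{x}}$ is interior to $\mathcal{X}$, we can perturb it in the direction $-A^\top\tilde\nu$ and stay in $\mathcal{X}$. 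This makes $\tilde\nu^\top(A\mathbf{x}-\mathbf{b})<0$ unless $A^\top\tilde\nu=0$, and full row rank forbids $A^\top\tilde\nu=0$. This contradiction is exactly where the interiority hypothesis is used, so I expect the care to lie here.

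Finally, with $\mu>0$, divide by $\mu$ and set $\lambda=\tilde\lambda/\mu$ and $\nu=\tilde\nu/\mu$. This gives
\[
f_0(\mathbf{x})+\lambda^\top f(\mathbf{x})+\nu^\top(A\mathbf{x}-\mathbf{b})\ge p^\star \quad\text{for all } \mathbf{x}\in\mathcal{X},
\]
so $g(\lambda,\nu)\ge p^\star$. Combined with weak duality, this yields $d^\star=p^\star$, attained at $(\lambda,\nu)$, which is strong duality.

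In the paper's setting I would then apply the result to (\ref{eq:staticproblem}) via Assumptions \ref{assum:concavity} and \ref{assum:interior}: $f_0=-\sum_i U_i$ is convex, and the constraints are affine.
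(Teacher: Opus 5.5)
Your proof is correct and is exactly the separating-hyperplane argument of Boyd and Vandenberghe (Sec.~5.3.2), which is all the paper relies on, since it only cites the result; you even make the same reductions to $A$ of full row rank and a Slater point in $\mathrm{int}\,\mathcal{X}$. One thing to make explicit: your third step uses $A\tilde{\mathbf{x}}=\mathbf{b}$, which the printed statement omits but which is genuinely needed (with $\mathcal{X}=\{(x,y,z)\in\mathbb{R}^3 : y>0\}$, $f_0=e^{-x}$, $f_1=x^2/y-z$ and the constraint $z=0$, the point $(0,1,1)$ satisfies the printed hypothesis, yet $p^\star=1$ and $d^\star=0$), so you are proving the standard form of Slater's theorem, which is what the paper intends.
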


\begin{proof}
    See \cite{Slater} or \cite{Boyd_Vandenberghe_2022}.
\end{proof}

\begin{theorem}
    \label{thm:convergence_static}
    Under the assumptions outlined in Section \ref{sec:formulation}, Algorithm \ref{alg:dualmethod} converges to the optimal solution, $\mathbf{x}^*$.
\end{theorem}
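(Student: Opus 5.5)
The plan is to reduce the convergence of Algorithm \ref{alg:dualmethod} to the standard convergence theory of projected (sub)gradient descent on the dual function. Strong duality, supplied by Theorem \ref{thm:slater}, then lets us transfer dual optimality back to the primal iterate $\mathbf{x}(k)$.

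\textbf{Step 1: strong duality.} Rewrite (\ref{eq:thebigproblem})--(\ref{eq:thebigconstraints}) in the form of Theorem \ref{thm:slater}. Take $f_0(\mathbf{x}) = -\sum_{i} U_i(\mathbf{x}_i)$, which is convex by Assumption \ref{assum:concavity}, and $f_m(\mathbf{x}) = q_m - (\mathbf{g}_m\otimes\mathbf{e}_t)^\top\mathbf{x}$, which is affine and hence convex. The set $\mathcal{X}=\prod_i \mathcal{X}_i$ is convex as a product of convex sets. Assumption \ref{assum:interior} supplies a strictly feasible point in the interior of $\mathcal{X}$. Theorem \ref{thm:slater} then gives zero duality gap, and the dual optimal set $\Lambda^*$ is nonempty and bounded.

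\textbf{Step 2: the $\lambda$-update is projected gradient descent on the dual function.} Define
\[
d(\lambda) = \max_{\mathbf{x}\in\mathcal{X}} \sum_i U_i(\mathbf{x}_i) + \lambda^\top(G\mathbf{x}-\mathbf{q}).
\]
This function is convex in $\lambda$. By Danskin's theorem, $G\mathbf{x}(k+1)-\mathbf{q}$ is a subgradient of $d$ at $\lambda(k)$, where $\mathbf{x}(k+1)$ is the maximizer computed in the algorithm. The $\lambda$-update is therefore exactly projected subgradient descent onto $\{\lambda\ge 0\}$. Since $\mathcal{X}$ is bounded (it is a box $0\le \mathbf{x}_i\le \mathbf{b}_i$), these subgradients are uniformly bounded by some $L$. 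The standard estimate
\[
\|\lambda(k+1)-\lambda^*\|^2 \le \|\lambda(k)-\lambda^*\|^2 - 2\alpha\bigl(d(\lambda(k))-d^*\bigr)+\alpha^2L^2
\]
then yields convergence of $d(\lambda(k))$ to within $\alpha L^2/2$ of $d^*$. If $\alpha$ is small enough, or diminishing, this gives $d(\lambda(k))\to d^*$ and $\lambda(k)\to\Lambda^*$.

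To get exact convergence with a constant $\alpha$, as the algorithm states, I would instead assume each $U_i$ is strictly (ideally strongly) concave. Then the inner argmax is unique, $d$ is differentiable, and its gradient $\lambda\mapsto G\mathbf{x}(\lambda)-\mathbf{q}$ is Lipschitz with constant $\|G\|^2/\mu$. Gradient projection with $\alpha < 2\mu/\|G\|^2$ then converges to $\lambda^*$.

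\textbf{Step 3: primal recovery.} This is the main obstacle. With strict concavity, $\mathbf{x}(\lambda)$ is unique and continuous in $\lambda$, so $\lambda(k)\to\lambda^*$ implies $\mathbf{x}(k)\to\mathbf{x}(\lambda^*)$. By strong duality and the saddle-point/KKT conditions (complementary slackness from the fixed point of the projection), this limit is feasible and attains the primal optimum, so it equals $\mathbf{x}^*$. Without strict concavity the iterates $\mathbf{x}(k)$ need not converge. In that case I would fall back on the ergodic average $\frac1k\sum_{s\le k}\mathbf{x}(s)$, whose constraint violation and suboptimality vanish by the same telescoping bound. I would state that strict concavity of the $U_i$ (which the designers can enforce, per the remark after Assumption \ref{assum:concavity}) is being used here.
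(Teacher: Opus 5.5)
Your route is the paper's route: rewrite the problem as a convex minimization, get strong duality from Slater's condition (Theorem~\ref{thm:slater}), treat the $\lambda$-update as projected gradient descent on the convex dual, and recover the primal solution from the limiting Lagrangian maximizer.

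The difference is in the hypotheses. The paper works under mere concavity with a constant $\alpha$. It asserts convergence to the dual optimum ``since it is a variant of gradient descent,'' and that the limiting $\mathbf{x}$ is optimal ``so long as it is primal-feasible, which will be the case when a feasible solution exists.'' Your extra hypothesis is therefore not a gap but a necessary repair, because under Assumptions~\ref{assum:comms}--\ref{assum:interior} alone the statement fails. Take one node, two tasks, $U(\mathbf{x})=x^1+4x^2$ on $\mathcal{X}=\{\mathbf{x}\ge 0:\ x^1+x^2\le b\}$, and the single constraint $x^1\ge q$ with $0<q<b$. The $\mathbf{x}$-update returns the vertex $(0,b)$ whenever $\lambda(k)<3$ and $(b,0)$ whenever $\lambda(k)>3$. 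So for generic $\alpha$ the multiplier oscillates around $\lambda^*=3$, and $\mathbf{x}(k)$ never approaches $\mathbf{x}^*=(q,b-q)$. The linear objectives used in the simulation are exactly of this type.

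Your Step~3 also supplies what the paper's last sentence lacks. Existence of some feasible point does not make the particular Lagrangian maximizer feasible. Feasibility alone would not give optimality either, without complementary slackness. You correctly obtain both from the fixed point of the projection, together with uniqueness of the maximizer.

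A few details need tightening:
\begin{itemize}
\item In Step~2, a constant step, however small, only gives $\min_{s\le k} d(\lambda(s))\le d^*+\alpha L^2/2+O(1/(\alpha k))$. The claims $d(\lambda(k))\to d^*$ and $\lambda(k)\to\Lambda^*$ need diminishing steps (e.g.\ $\sum_k\alpha_k=\infty$, $\sum_k\alpha_k^2<\infty$).
\item In the ergodic fallback with constant $\alpha$, the averaged constraint violation does decay like $O(1/(\alpha k))$. However, the suboptimality bound only shrinks to $O(\alpha L^2)$, not to zero.
\item The Lipschitz constant $\|G\|^2/\mu$ and the constant-step conclusion require $\mu$-strong concavity. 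Strict concavity gives a unique, continuous $\mathbf{x}(\lambda)$ but not a Lipschitz dual gradient, and constant-step projection can then cycle. State the result under $\mu$-strong concavity of the $U_i$, with $\alpha<2\mu/\|G\|^2$.
\end{itemize}
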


\begin{proof}
    We first rewrite (\ref{eq:staticproblem}) as an equivalent minimization problem:

    \begin{equation}
    \label{eq:staticproblem_rewritten}
    \begin{split}
       & \min_{\mathbf{x}_i\in\mathcal{X}_i}
     \sum_{i \in \mathcal{N}} -U_i(\mathbf{x}_i) \\
    \text{s.t. }  -(\mathbf{g}_m(k) & \otimes\mathbf{e}_t )^\top\mathbf{x}  \leq -q_m \quad \forall m \in \mathcal{M}.
    \end{split}
    \end{equation}
    
    Because each $U_i(\mathbf{x}_i)$ is concave and the constraints are linear, both the objective and the constraints of (\ref{eq:staticproblem_rewritten}) are convex. Combined with Assumptions \ref{assum:concavity} and \ref{assum:interior}, (\ref{eq:staticproblem_rewritten}) satisfies Slater's condition, so strong duality holds. 
    
    Because (\ref{eq:staticproblem_rewritten}) is convex, the dual problem is convex, and so Algorithm \ref{alg:dualmethod} will converge to the dual optimal $(\mathbf{x}^*,\lambda^*)$ since it is a variant of gradient descent. Therefore, since Algorithm \ref{alg:dualmethod} achieves the dual optimal and strong duality applies, $\mathbf{x}^*$ will be primal optimal so long as it is primal-feasible, which will be the case when a feasible solution exists for the primal problem \cite{Boyd_Vandenberghe_2022}.
\end{proof}

Given some additional assumptions on the feasibility of (\ref{eq:staticproblem}) over time, we can come up with a similar theorem for the dynamic case.

\begin{theorem}
    At each time step, if the assumptions in Section \ref{sec:formulation} hold and (2) is feasible, algorithm \ref{alg:dualmethod} converges to the optimal solution at that time step, $\mathbf{x}^*$.
\end{theorem}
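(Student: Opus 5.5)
The plan is to reduce the dynamic statement to Theorem \ref{thm:convergence_static} by freezing the time index. The key observation is the two-time-scale assumption made after Algorithm \ref{alg:dualmethod}: within a single time step $k$ of the environment, the data $\mathcal{N}(k)$, $\mathcal{X}_i(k)$, $\mathbf{g}_m(k)$ and $q_m(k)$ are held fixed while the iterations of Algorithm \ref{alg:dualmethod} run. For that fixed $k$, problem (\ref{eq:thebigproblem_dynamic})--(\ref{eq:thebigconstraints_dynamic}) is then literally an instance of the static problem (\ref{eq:staticproblem}). Its node set is $\mathcal{N}(k)$, its domains are $\mathcal{X}_i(k)$, and its constraint matrix $G(k)$ has rows $(\mathbf{g}_m(k)\otimes\mathbf{e}_t)^\top$ and right-hand side $\mathbf{q}(k)$.

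The steps are as follows. First, fix $k$ and write the frozen problem in the minimization form (\ref{eq:staticproblem_rewritten}), with all data indexed by $k$. Second, check the hypotheses of Theorem \ref{thm:convergence_static} at time $k$:
\begin{itemize}
\item Assumption \ref{assum:concavity} gives concavity of each $U_i$, so the objective of the minimization form is convex.
\item The constraints remain linear in $\mathbf{x}(k)$.
\item Assumption \ref{assum:interior}, assumed to hold at step $k$ as the statement stipulates, gives convex $\mathcal{X}_i(k)$ and a strictly feasible interior point.
\item Feasibility of (2) at step $k$ is exactly the extra hypothesis in the statement.
\end{itemize}
Third, invoke Theorem \ref{thm:slater} to get strong duality for the frozen problem. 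Fourth, apply Theorem \ref{thm:convergence_static} verbatim to conclude that the dual iterates converge to a dual optimum and that the recovered $\mathbf{x}$ is primal optimal, i.e.\ equal to $\mathbf{x}^*(k)$.

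One remark handles the warm start. In practice the algorithm at step $k$ is initialized from the iterate left over from step $k-1$, not from $\lambda=0$. The convergence argument for projected dual (sub)gradient descent does not depend on the starting point in $\{\lambda\ge 0\}$, so any nonnegative initial $\lambda$ is admissible, and the conclusion is unaffected.

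The main obstacle is this warm-start and changing-dimension issue. When $\mathcal{N}(k)$ or the set of node groups changes, the dimensions of $\mathbf{x}$ and $\lambda$ change. I would handle it by initializing new coordinates of $\lambda$ at $0$ and discarding coordinates of departed groups. This projects the old iterate onto the new nonnegative orthant, which is a valid initialization, so the static theorem applies unchanged. The genuine assumption carried over is that the problem stays constant long enough for the iterations to converge, which is the time-scale separation already adopted.
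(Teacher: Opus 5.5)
Your proposal is correct and follows the same route as the paper: you fix the time step, observe that the frozen dynamic problem is an instance of the static problem (\ref{eq:staticproblem}) started from a different initial condition, and invoke Theorem~\ref{thm:convergence_static}. Your remarks on warm starts (any $\lambda \geq 0$ is an admissible starting point) and on re-indexing $\lambda$ when $\mathcal{N}(k)$ or the node groups change spell out what the paper compresses into the phrase ``except with different initial conditions.''
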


\begin{proof}
    If the conditions hold at each time step, then the algorithm applied to the dynamic system is exactly equivalent to the static case, except with different initial conditions. Therefore Theorem \ref{thm:convergence_static} holds, and the algorithm returns the primal optimal as desired when run at each time step.
\end{proof}



\section{Numerical Simulation}
\label{sec:simulation}

To demonstrate the proposed modeling and algorithm, we conducted a simulation study motivated by the scientific application, wildfire detection and monitoring \cite{altintas2015towards}, and using real-world data from the Sage edge computing platform~\cite{Beckman:2019} for the application. We first describe the network conditions and properties of the tasks to be scheduled, and then present the simulated behavior of the network under two different ranges of conditions.

\subsection{Data Preparation with Sage}

To design a simulation that reflects the real setting of the motivating example in Section \ref{sec:theexample} (see Fig. \ref{fig:wildfire}), we use Sage's cyberinfrastructure and generate a dataset. The dataset includes the user requirements (i.e., the minimum QoS) for the tasks and total energy and memory-time required to run the AI task on the Sage node. While we can compute the memory-time for a task easily by multiplying the task's runtime by its memory requirement, measuring the energy requirement is a bit more difficult. We first gather the performance of the AI tasks currently scheduled on Sage nodes and system power consumption during the task execution on the node using the Sage application programming interface (API). Then, we subtract the Sage node's idle power consumption from the system power to estimate the total energy used only for the task. The simulation thus uses the energy and memory consumption of the real AI tasks running on different Sage edge devices.

As in the motivating example (Section \ref{sec:theexample}), there are two tasks that we examined: task 1, which monitors potential wildfires for early warning and detection, and task 2, which analyzes and tracks the movement of clouds. The hardware requirements of the tasks are shown in Table \ref{tab:sciencejobs}, with two different measurements depending on what type of node ran the task.

\begin{table*}[htbp]
\caption{Characteristics of the target science tasks.}
\begin{center}
\begin{tabular}{|p{0.13\linewidth}|p{0.15\linewidth}|p{0.1\linewidth}|p{0.09\linewidth}|p{0.09\linewidth}|p{0.11\linewidth}|p{0.11\linewidth}|}
\hline
\multirow{2}{*}{\textbf{Task Name}} & \multirow{2}{*}{\textbf{Arrival Rate}} & 
\multirow{2}{*}{\begin{tabular}{@{}c@{}} \textbf{Min Runs per} \\ \textbf{Node per Cycle} \end{tabular}} &\multicolumn{2}{c|}{\textbf{Energy Consumption (Joules)}} & \multicolumn{2}{c|}{\textbf{Memory Consumption (GB seconds)}} \\
& & & Nvidia Jetson & Raspberry Pi & Nvidia Jetson & Raspberry Pi  \\
\hline
WildFire Monitoring~\cite{dewangan2022figlib} & Every 5 minutes & 2 & 68  & 42 & 50.4 &  67.2 \\
\hline
Cloud Analysis~\cite{park2021prediction} & Every 10 minutes & 1 & 70 & 48 & 240 &  277.5 \\
\hline
\end{tabular}
\vspace{-15pt}
\label{tab:sciencejobs}
\end{center}
\end{table*}

\subsection{Simulation Structure}


\begin{figure}
    \centering
    \includegraphics[width=\linewidth]{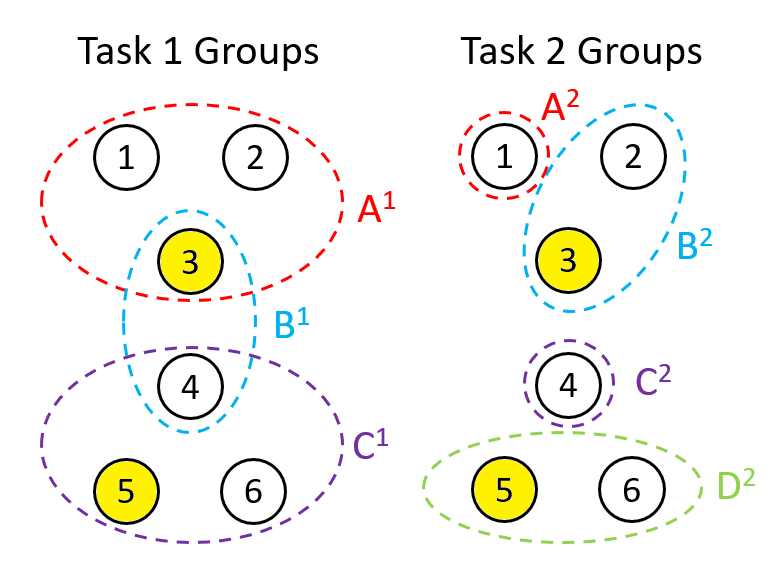}
    \caption{The network used in our simulation. The wildfire monitoring task (task 1) uses the node groups on the left, while the cloud analysis task (task 2) uses the node groups on the right. Nodes 3 and 5, indicated in yellow, are higher-performance Nvidia Jetson devices, while nodes 1, 2, 4, and 6, indicated in white, are lower-end Raspberry Pi devices.}
    \label{fig:simnetwork}
\end{figure}

The network we simulated is shown in Fig. \ref{fig:simnetwork}. There are 6 nodes, and two sets of node groups, each corresponding to a different task. Nodes 3 and 5, indicated in yellow, represent edge computing nodes that are equipped with a Nvidia Jetson device: these are higher performance, higher power nodes, compared to nodes 1, 2, 4, and 6, which are the nodes with a Raspberry Pi for computing: these are lower performance, lower power nodes. The hardware capabilities of each node that are relevant to the simulation are shown in Table \ref{tab:nodecapabilities}. Because we have two types of nodes with different capabilities, tasks run on one type will have different hardware requirements than the tasks run on the other. Additionally, tasks run on the higher performance nodes are considered more valuable in terms of QoS, since a higher performance node may complete the task more quickly, or may produce higher quality outputs than a lower performance node would.

\begin{table}[]
    \centering
    \caption{The hardware constraints of the nodes in our simulation.}
    \begin{tabular}{|l|c|c|c|c|}
    \hline
        \textbf{Node Type} & \begin{tabular}{@{}c@{}} \textbf{Available} \\ \textbf{Power} \\ \textbf{(W)} \end{tabular} & \begin{tabular}{@{}c@{}} \textbf{Energy} \\ \textbf{per Cycle} \\ \textbf{(J)} \end{tabular} & \begin{tabular}{@{}c@{}}\textbf{Available} \\ \textbf{Memory} \\ \textbf{ (GB)} \end{tabular} & \begin{tabular}{@{}c@{}} \textbf{Memory-time} \\ \textbf{per Cycle} \\ \textbf{(GBsec)} \end{tabular} \\ \hline
        Nvidia Jetson & 10  &6000& 6 &3600   \\ \hline
        Raspberry Pi & 4 &2400 & 3 & 1800 \\ \hline
    \end{tabular} 
    \label{tab:nodecapabilities}
\end{table}

The simulation is structured around assigning tasks on a 10-minute cycle. During that 10-minute cycle, it is assumed that each node is capable of handling its own internal schedule to complete its assigned tasks. So long as the total energy of tasks assigned in that cycle doesn't exceed the energy available, the node will complete those tasks, Similarly, we calculate the total memory-time needed to complete the task, and ensure that the total memory-time of assigned tasks doesn't exceed the maximum memory-time of the node per cycle. Based on the arrival rate of each task, we can determine the minimum number of times the task needs to be run per node, per cycle. 

With all the above, we can now present the specific parameter choices for (\ref{eq:staticproblem}), the optimization problem the scheduler will solve. The policy vector $\mathbf{x}$ represents the number of copies of each task that are run on each node during the cycle. The domain of optimization for each node $i$, $\mathcal{X}_i$, is lower bounded by 0, and upper bounded by the two linear inequalities, 
\begin{subequations}
    \begin{equation}
        p_i^1x_i^1 + p_i^2x_i^2 \leq b_i[pow],\hspace{1cm} \text{(Energy)}
    \end{equation}
    \begin{equation}
        d_i^1x_i^1 + d_i^2x_i^2 \leq b_i[mem], \hspace{1cm} \text{(Memory)}
    \end{equation}
\end{subequations}
where $p_i^t$ is the average energy in $J$ required to run task $t$ on node $i$, $d_i^t$ is the total memory in GB seconds required to run $t$ on $i$, $b_i[pow]$ is the maximum energy per cycle for $i$, and $b_i[mem]$ is the maximum memory-time per cycle for $i$. 

The node group vectors, $\mathbf{g}_m$, are vectors of 1s and 0s, with $\mathbf{g}_m^t[i] = 1$ if node $i$ is in node group $m$, and $\mathbf{g}_m^t[i] = 0$ otherwise. The QoS minimums are defined as $[q_{A1} \space q_{B1} \space q_{C1} ] = [\text{6 4 6}]^\intercal$ and $[q_{A2} \space q_{B2} \space q_{C2} \space q_{D2}] = [\text{1 2 1 2}]^\intercal$ in accordance with each node group having to run the minimum number of tasks per node per cycle, as in Table \ref{tab:sciencejobs}. Lastly, we use a linear objective function with the following constants:

\begin{equation*}
\label{eq:sim_objective}
    \hspace{0.5cm} U_i(\mathbf{x}_i) = \bigg\{ 
    \begin{split}
        \hspace{-3cm} 1.3x_i^1  +6x_i^2 \hspace{0.5cm} & \text{$i$ is high power} \\ 
        \hspace{-3cm} x_i^1  + 4x_i^2 \hspace{0.5cm} & \text{$i$ is low power} 
    \end{split}
    \bigg\}. \hspace{1.15cm} \text{(9)} \hspace{-1.15cm}
\end{equation*}

This represents the case where task 2 is considered 4 times as valuable as the wildfire task (to account for the increased power and memory requirements), and task 1 / task 2 being run on higher power nodes is considered 1.3 / 1.5 times more valuable than if it were run on lower power nodes, respectively.

\subsection{Simulation Runs and Results}

The first simulation we conducted examined the system's performance when its node's maximum capabilities were degraded. Specifically, we lowered the maximum energy and memory that nodes 3 and 6 could allocate to tasks, doing so in 1\% increments starting from 0 degradation (full capability) all the way to 1 degradation (no capability). A situation of this sort might occur if a node is in an environment in which it needs to conserve power, so it artificially lowers its own capabilities.

To emulate our algorithm's ability to respond to emergent situations, we ran our algorithm and calculated the resulting QoS assuming it has perfect knowledge of the node's degradation. The degradation is captured by updating $\mathcal{X}$ and letting the algorithm run until convergence before updating the degradation again. We are able to reasonably emulate our algorithm's dynamicity in this way because it converged in seconds, in comparison to the cycle length of 10 minutes. We call the algorithm running under these circumstances the \textit{dynamic policy}. 

For comparison, we also considered a \textit{static policy} that uses the result of the optimization problem with 0 degradation, with proportional scaling down for the policies associated with degraded nodes -- e.g., with 0.1 degradation on node $i$, and an original policy of $\mathbf{x}_i$, the static policy would assign a new policy of $0.9\mathbf{x}_i$, regardless of QoS constraint violations. The static policy thus emulates the case in which nodes cannot coordinate at the edge and respond to sudden changes.

The second simulation we ran examined how the scheduler handled changes to the QoS minimum. While holding the QoS minimum to be the same for task 2, we varied the per-node QoS minimum for the task 1 between 2 runs to 16 runs, emulating an increased demand for wildfire detection caused by environmental factors that would lead to an increased chance for a wildfire to start. In this way, we examined the effects of changing the upper bounds of $\mathbf{x}$ in the first simulation, and changing the lower bounds in the second.

\begin{figure}
    \centering
    \includegraphics[width=\linewidth]{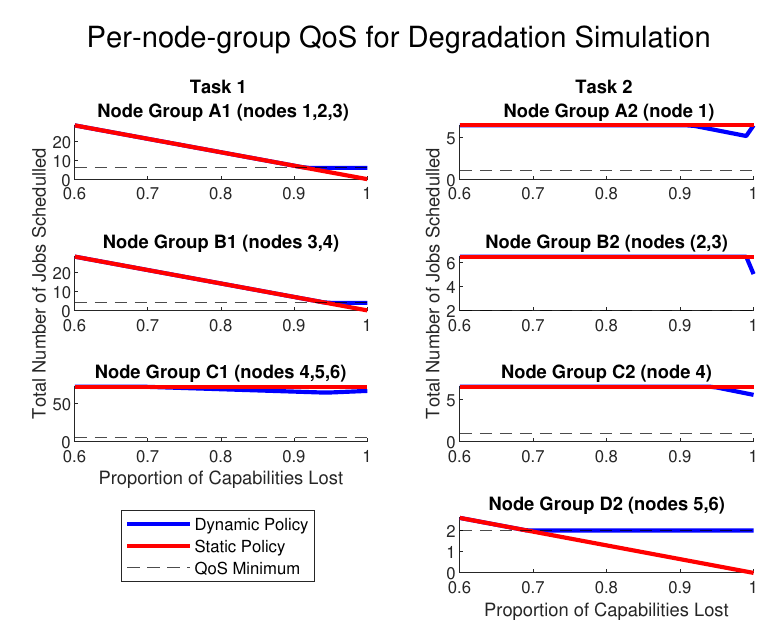}
    \caption{The number of tasks ran on each node group for the degradation simulation. The left shows the node groups for task 1, while the right shows the node groups for task 2. The dynamic and static policy overlay each other at first, but the static policy fails to uphold the QoS minimum in high degradation while the dynamic policy maintains it.}
    \label{fig:degrade group qos}
\end{figure}

The results are shown in Figures \ref{fig:degrade group qos}-\ref{fig:qos node qos}. Figure \ref{fig:degrade group qos} shows the total number of jobs ran on each node group for the degradation simulation, with task 1 on the left and task 2 on the right. In high degradation ($>0.6$), the static policy begins to fail while the dynamic policy effectively reassigns tasks to meet the QoS minimum. We omitted the analogous plot for the simulation that changes the QoS minimum because it behaves similarly: the static policy fails while the dynamic policy continues to return a feasible solution.

\begin{figure}
    \centering
    \includegraphics[width=\linewidth]{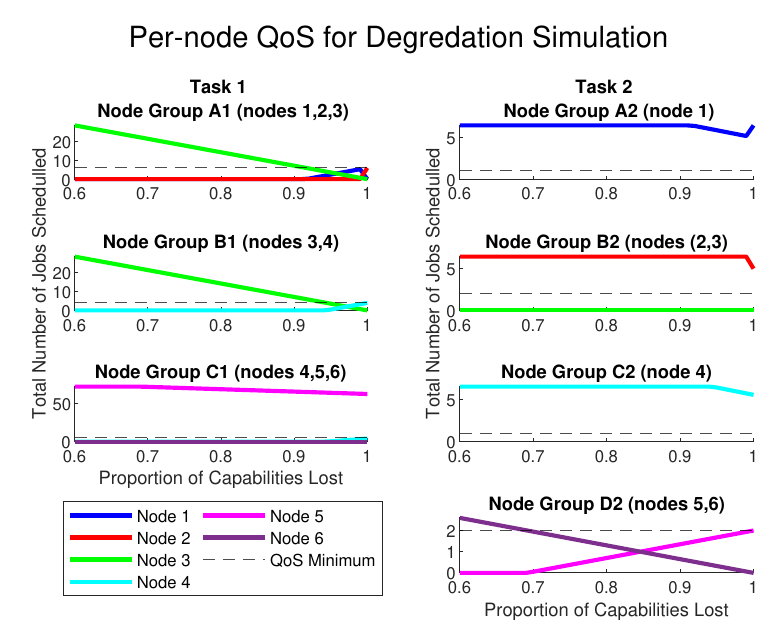}
    \caption{The number of tasks ran on each node for the degradation simulation. As nodes 3 and 6 start falling below the minimum QoS requirements, other nodes in their node groups reallocate resources to fill the gaps.}
    \label{fig:degrade node qos}
\end{figure}

To see what is happening more clearly, Fig. \ref{fig:degrade node qos} shows the tasks run by each node for the degradation simulation. When degradation reaches 0.7, node 6, which meets the QoS minimum for task 2 for node group D2, begins to fall below the QoS minimum. In response, node 5 begins working on task 2 in order for the node group to meet the QoS minimum, resulting in a reduction in its effort towards task 1. Similarly, when degradation reaches 0.9, node 3 begins to fall below the minimum for task 1, resulting in nodes 1 and 4 filling the gap. The sudden change in nodes 1 and 2 at full degradation is due to there being multiple solutions for the optimization problem in our setup, since 1 and 2 have identical capabilities and objective functions. Because the solver must set $\mathbf{x}_3$ and $\mathbf{x}_6$ to $0$ at full degradation, its search for the optimal solution behaves a little differently.

\begin{figure}
    \centering
    \includegraphics[width=\linewidth]{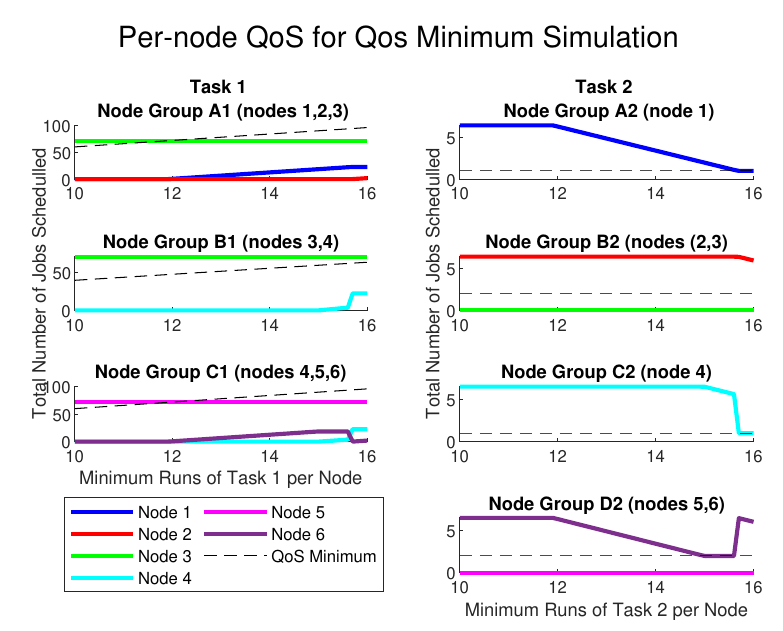}
    \caption{The number of tasks ran on each node for the QoS minimum simulation. As the QoS minimum for task 1 increases, more nodes shift their resources to meet the higher requirements.}
    \label{fig:qos node qos}
\end{figure}

Finally, Fig. \ref{fig:qos node qos} shows the same information as Fig. \ref{fig:degrade node qos}, but for the simulation that changes the QoS minimum. When the minimum runs per node exceeds 12, nodes 1 and 6 begin to switch over from task 2 to task 1 to aid node groups A1 and C1 (B1 has fewer nodes, and thus a lower QoS minimum). Then, when the minimum runs per node exceeds 15, nodes 1 and 6 cannot devote any more effort because of their QoS constraints concerning task 2, so node 4 switches over. As before, because nodes 4 and 6 have identical capabilities and objectives concerning task 1, there are multiple optimal solutions, and a sudden switch between them occurs once node 2 begins helping with task 1.

\section*{Conclusion}

We present a novel framework for task allocation in edge computing systems using an optimization problem. The framework is flexible enough to capture heterogeneity in the network's nodes and different characteristics and requirements in the network's assigned tasks. We propose a distributed algorithm to solve our optimization problem, and demonstrate its ability to respond to a dynamic environment through a simulation with real world data. Future work can take many different directions, including verification of the framework and proposed algorithm on actual hardware, expanding the flexibility of the framework by using a more general objective function, and studies comparing our optimization framework to the AI methods for task allocation that are increasing in popularity.


\bibliographystyle{IEEEtran}
\bibliography{bib}

\end{document}